\newtheorem{thm}{Theorem}[section] 
\newtheorem{prop}[thm]{Proposition}
\newtheorem{cor}[thm]{Corollary}
\newtheorem{rem}[thm]{Remark}
\theoremstyle{definition}
\newtheorem{definition}[thm]{Definition}
\newcommand{\C}{\mathbb{C}}
\numberwithin{equation}{section}
\begin{document} 

\title[Representations of  the fundamental groups of Sasakian manifolds]{Almost-formality and deformations of representations of  the fundamental groups of Sasakian manifolds}

\author[H. Kasuya]{Hisashi Kasuya}

\address{Department of Mathematics, Graduate School of Science, Osaka University, Osaka,
Japan}

\email{kasuya@math.sci.osaka-u.ac.jp}

\subjclass[2010]{14F35, 53C07, 53C25, 58A14}

\keywords{Deformations of representations, harmonic bundle,  Sasakian manifolds, Formality.}

\begin{abstract}
For a $2n+1$-dimensional compact Sasakian manifold, if $n\ge 2$, we prove that the analytic germ of the variety  of representations of the fundamental group at every semi-simple representation is quadratic.
To prove this result, we prove the almost-formality of de Rham complex of a Sasakian manifold with values in a semi-simple flat vector bundle.
By  the almost-formality, we also prove the vanishing theorem on the cup product of the cohomology of semi-simple flat vector bundles over a   compact Sasakian manifold.
\end{abstract}

\maketitle
\section{Introduction}
In \cite{GM}, Goldman and Millson establishes the correspondence between local structures of the varieties of representations of the fundamental groups of manifolds and the deformation theory of augmented differential graded  Lie algebras.
By using this correspondence, they proves that  for a compact K\"ahler manifold $M$ with a point $x$,   the analytic germ of the variety ${\mathcal R}(\pi_{1}(M,x), GL_{m}(\C))$ of representations at $\rho$  is quadratic for any representation $\rho :\pi_{1}(M,x)\to GL_{m}(\C)$ which is the monodromy representation of a polarized variation of Hodge structure.
In \cite{SimL}, Simpson generalizes this result  for  every semi-simple representation $\rho :\pi_{1}(M,x)\to GL_{m}(\C)$ by using harmonic bundle structures of semi-simple flat bundles over compact  K\"ahler manifolds.

The aim of this paper is to give an  odd-dimensional analogue of this result.
A Sasakian manifold is  viewed as an important odd-dimensional analogue of a K\"ahler manifold (\cite{Sa}).
The main result of this paper is to prove the following statement.
\begin{thm}\label{MTS}
Let $M$ be a $(2n+1)$-dimensional compact Sasakian manifold with a point $x$.
If $n\ge 2$, then for any semi-simple representation $\rho :\pi_{1}(M,x)\to GL_{m}(\C)$ of the fundamental group, the analytic germ of the variety ${\mathcal R}(\pi_{1}(M,x), GL_{m}(\C))$ of representations at $\rho$  is quadratic.
\end{thm}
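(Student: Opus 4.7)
The plan is to follow the Goldman--Millson--Simpson template adapted to the Sasakian setting by means of the two structural results announced in the abstract: almost-formality of the twisted de~Rham complex and vanishing of the cup product on the twisted cohomology. First I would recall the Goldman--Millson correspondence: the analytic germ of $\mathcal{R}(\pi_{1}(M,x), GL_{m}(\C))$ at $\rho$ is isomorphic to the base of the universal formal deformation of the augmented DGLA $L := (A^{*}(M,\mathrm{End}(E_{\rho})), d_{\rho}, [\cdot,\cdot])$, where $E_{\rho}$ is the flat bundle associated to $\rho$. Quadraticity of the germ is therefore equivalent to the statement that, up to $L_{\infty}$-quasi-isomorphism, the Maurer--Cartan locus on $L$ is cut out by the quadratic equation $[\xi,\xi]=0$ on $H^{1}(L)$; equivalently, every higher Massey-type obstruction on $H^{*}(L)$ must be forced to vanish.

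Second, I would invoke the almost-formality result of the paper to replace $L$ by an explicit model DGLA $M_{L}$ built from the basic (transverse) cohomology of the Reeb foliation with values in the harmonic bundle structure carried by $E_{\rho}$, together with a small piece that accounts for the contact form $\eta$. By analogy with the untwisted Sasakian model, I expect $M_{L}$ to take the form $H^{*}_{\mathrm{bas}}(M,\mathrm{End}(E_{\rho}))\otimes \Lambda(\eta)$ modulo relations, with a residual differential produced by the transverse K\"ahler class. One then expands a Maurer--Cartan element $\xi\in M_{L}^{1}$ as $\alpha+\beta\wedge\eta$ with $\alpha,\beta$ in basic cohomology and analyses the equation component by component. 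Here the vanishing theorem on cup products (also proved in the paper) enters: it kills the products of basic classes that would otherwise produce cubic corrections, so that the Maurer--Cartan equation reduces to a system of homogeneous quadratic equations in $H^{1}(L)$. Combined with the standard unobstructedness argument from \cite{GM}, this gives the quadratic presentation of the germ at $\rho$.

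The main obstacle I expect is the second step: controlling the extra generator coming from the Reeb direction inside the almost-formal model. In the K\"ahler setting of \cite{SimL}, full formality of $A^{*}(M,\mathrm{End}(E_{\rho}))$ follows from the $dd^{c}$-lemma for harmonic bundles, and no auxiliary generator is present. In the Sasakian case, $\eta$ produces a genuine non-trivial differential on the model and potential non-zero triple products in degrees involving $\eta$. The hypothesis $n\ge 2$ should enter precisely to guarantee that these contributions land in degrees where the cup-product vanishing applies, so they cannot obstruct the quadratic description. Verifying this degree count, and ensuring that the transverse K\"ahler--Hodge theory on the harmonic bundle interacts compatibly with the Reeb decomposition so that the almost-formal model really does compute the Maurer--Cartan theory of $L$, is where the essential work lies.
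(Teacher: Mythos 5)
Your first two steps match the paper: invoke the Goldman--Millson correspondence to reduce to the Maurer--Cartan locus of the augmented DGLA $A^{*}(M,\mathrm{End}(E))$, then use the almost-formality (Corollary \ref{alfo}) to replace it by the finite-dimensional model $H^{*}_{B}(M,\mathrm{End}(E))\oplus H^{*}_{B}(M,\mathrm{End}(E))\otimes\langle\eta\rangle$, whose only differential is $\beta\otimes\eta\mapsto\beta\wedge d\eta$. Writing $\omega=\alpha+\beta\otimes\eta$ and splitting the Maurer--Cartan equation into components is also exactly what the paper does.

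The gap is in the mechanism you propose for handling the extra component. The Maurer--Cartan equation on the model splits into $\beta\wedge d\eta+\tfrac{1}{2}[\alpha,\alpha]=0$ in $H^{2}_{B}$ and $[\alpha,\beta]\otimes\eta=0$ in $H^{1}_{B}\otimes\langle\eta\rangle$, and the task is to show the second equation is redundant so that the germ is cut out by the first (quadratic) one alone. You propose to kill the offending products via the cup-product vanishing theorem, but that theorem requires $s,t<n$ and $s+t>n$, which is never satisfied for products of degree-one (or degree-zero) classes when $n\ge 2$; indeed, if it applied to $H^{1}\otimes H^{1}$ it would also kill $[\alpha,\alpha]$ and falsely predict smoothness. (There are also no genuine ``cubic corrections'' to worry about: after almost-formality the model is a strict DGLA, not an $L_{\infty}$-algebra with higher brackets.) The ingredient the paper actually uses is the transverse hard Lefschetz property coming from the K\"ahler identities \eqref{KaidT}: for $n\ge 2$ the map $H^{1}_{B}(M,\mathrm{End}(E))\ni a\mapsto a\wedge d\eta\in H^{3}_{B}(M,\mathrm{End}(E))$ is injective. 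Combining the first Maurer--Cartan component with the graded Jacobi identity gives $[\alpha,\beta]\wedge d\eta=[\alpha,\beta\wedge d\eta]=-\tfrac{1}{2}[\alpha,[\alpha,\alpha]]=0$, and Lefschetz injectivity then forces $[\alpha,\beta]=0$. This is where the hypothesis $n\ge 2$ enters (for $n=1$ the map $H^{1}_{B}\to H^{3}_{B}$ fails to be injective), not through the cup-product theorem, which is an independent application of almost-formality rather than an input to the main theorem.
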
 

To  prove  the quadraticity,  it is important   to reduce the  de Rham complex of  a compact K\"ahler manifold  with values in a flat bundle to its cohomology   via quasi-isomorphisms so-called  the  formality as in \cite{DGMS}.
In  \cite{SimL}, by using harmonic bundle structures, Simpson proves the formality for every semi-simple flat bundle over a compact K\"ahler manifold.
To prove Theorem \ref{MTS}, we prove the "almost-formality" of  the de Rham complex of a Sasakian manifold  with values in a semi-simple  flat bundle (Corollary \ref{alfo}) by using (basic) harmonic bundle structures as in \cite{BK}.

We have another application of such almost-formality.
By the almost-formality, we also prove the following statement in the cup product.
\begin{thm}
Let $M$ be a $(2n+1)$-dimensional compact Sasakian manifold and $E$ and $E^{\prime}$ be semi-simple  flat complex vector bundles.
For $s, t<n$ with $s+t> n$,  the cup product
\[H^{s}(M, \,E)\otimes H^{t}(M, \,E^{\prime})\to H^{s+t}(M, \,E\otimes E^{\prime})
\]
vanishes.
\end{thm}

\section{Harmonic bundles}

Let $M$ be a compact Riemannian manifold and $E$ a flat complex vector bundle over $M$
equipped with a flat connection $D$.
For any Hermitian metric $h$ on $E$, we have a unique decomposition
\begin{equation}\label{code}
D=\nabla+\phi
\end{equation}
such that $\nabla$ is a unitary connection and $\phi$ is a $1$-form on $M$ with values in
the self-adjoint endomorphisms of $E$ with respect to $h$.

\begin{thm}[\cite{Cor}]
A flat complex vector bundle $(E,\, D)$ is semi-simple if and only if  there exists a Hermitian metric
(called the harmonic metric) $h$ on $E$ such that
\[
\nabla^{\ast}\phi=0,
\]
where $\nabla^{\ast}$ is the formal adjoint operator of $\nabla$.

\end{thm}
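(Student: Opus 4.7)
The plan is to prove both directions using the correspondence between Hermitian metrics on $E$ and $\rho$-equivariant maps $f\colon \tilde M \to X := GL_{m}(\C)/U(m)$ from the universal cover into the nonpositively curved Riemannian symmetric space $X$, where $\rho$ is the monodromy of $D$. Under this correspondence, the decomposition $D=\nabla+\phi$ attached to $h$ satisfies $|\phi|^{2}$ equal to a universal constant multiple of $|df_{h}|^{2}$, and the harmonic metric equation $\nabla^{\ast}\phi=0$ is precisely the harmonic map equation $\tau(f_{h})=0$ for the tension field.

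For the direction \emph{harmonic metric implies semi-simple}, suppose $h$ is harmonic and let $F\subset E$ be a $D$-invariant subbundle; write $\pi$ for the $h$-orthogonal projector onto $F$. The $D$-invariance translates to $(1-\pi)(D\pi)=0$. Splitting $D=\nabla+\phi$ and using that $\nabla\pi$ is pointwise self-adjoint while $[\phi,\pi]$ is skew-adjoint (since $\pi=\pi^{\ast}$ and $\phi$ takes values in self-adjoint endomorphisms), the hypothesis $\nabla^{\ast}\phi=0$ allows a Bochner-type integration by parts on the compact manifold $M$ to conclude $\nabla\pi=0$ and $[\phi,\pi]=0$. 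Consequently $F^{\perp}$ is also $D$-invariant and $E=F\oplus F^{\perp}$ is a splitting of flat bundles; iterating yields complete reducibility.

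For the direction \emph{semi-simple implies harmonic metric exists}, I would start from any $\rho$-equivariant map $f_{0}$ and run the nonlinear heat flow $\partial_{t}f_{t}=\tau(f_{t})$. Because $X$ is a Cartan--Hadamard manifold, standard Eells--Sampson theory (in its equivariant form) gives long-time existence, monotone decrease of the energy $\mathcal{E}(f_{t})=\tfrac{1}{2}\int_{M}|\phi_{t}|^{2}$, and uniform bounds on all higher derivatives. If the maps $f_{t}$ remain uniformly bounded on a fundamental domain of $\tilde M$, an Ascoli-type subconvergence argument produces a $\rho$-equivariant harmonic map $f_{\infty}$, whose associated Hermitian metric satisfies $\nabla^{\ast}\phi=0$.

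The main obstacle, and the heart of Corlette's argument, is to rule out escape to infinity. If $\sup_{x\in M}\operatorname{dist}(f_{t}(x),f_{0}(x))\to\infty$, a Korevaar--Schoen type rescaling and ultralimit procedure, crucially using the convexity of distance functions on the nonpositively curved space $X$, produces a $\rho$-invariant point in the visual boundary $\partial_{\infty}X$. Under the identification of $\partial_{\infty}X$ with the spherical building of $GL_{m}(\C)$, such a fixed point corresponds to a proper parabolic subgroup $P\subset GL_{m}(\C)$ stabilised by $\rho(\pi_{1}(M,x))$, equivalently to a proper $D$-invariant flag in $E$ with no $D$-invariant complement. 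This contradicts the assumed semi-simplicity of $(E,D)$, forcing the flow to stay bounded and yielding the desired harmonic metric. The delicate step is thus the extraction of a well-defined asymptotic direction from an unbounded flow and its translation into an algebraic reduction of the structure group of $E$.
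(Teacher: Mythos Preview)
The paper does not give its own proof of this statement: it is quoted verbatim as Corlette's theorem with a citation to \cite{Cor} and is used as a black box thereafter. There is therefore nothing in the paper to compare your argument against.

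That said, your sketch is a faithful outline of the standard proof. A couple of remarks on accuracy. In the ``semi-simple $\Rightarrow$ harmonic metric'' direction, Corlette's original argument is a direct minimization of the energy on the space of $\rho$-equivariant maps rather than the Eells--Sampson heat flow, although the heat-flow variant you describe also works and leads to the same dichotomy (convergence versus escape to infinity). More substantively, invoking a ``Korevaar--Schoen type rescaling and ultralimit procedure'' is anachronistic and heavier machinery than is needed: Corlette works directly with the geometry of the globally symmetric space $X=GL_m(\C)/U(m)$, using that an unbounded sequence of equivariant maps with bounded energy picks out a geodesic ray, hence a point of the Tits boundary $\partial_{\infty}X$, fixed by $\rho(\pi_1)$; this is then identified with a proper parabolic subgroup. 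No ultralimits or $\R$-trees are required in the target-symmetric-space case. Your identification of this step as the ``delicate'' one is correct, and your description of the algebraic consequence (a $D$-invariant flag without invariant complement) is exactly the contradiction with semi-simplicity.

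For the ``harmonic $\Rightarrow$ semi-simple'' direction your Bochner/integration-by-parts sketch is the right idea; the clean way to organize it is to note that $D\pi=\pi(D\pi)(1-\pi)$, decompose into the self-adjoint part $\nabla\pi$ and the skew-adjoint part $[\phi,\pi]$, and pair against $(D\pi)^{\ast}=\nabla\pi-[\phi,\pi]$ to force both pieces to vanish.
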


\section{Sasakian manifolds}

Let $M$ be a $(2n+1)$-dimensional real smooth manifold. A {\em CR-structure} on $M$ is an 
$n$-dimensional complex sub-bundle $T^{1,0}$ of the complexified tangent bundle $TM_{\C}\,=\, 
TM\otimes_{\mathbb R} {\C}$ such that $T^{1,0}\cap \overline{T^{1,0}}=\{0\}$ and $T^{1,0}$ is 
integrable. We 
shall denote $\overline{T^{1,0}}$ by $T^{0,1}$. For a CR-structure $T^{1,0}$ on $M$, there is 
a unique sub-bundle $S$ of rank $2n$ of the real tangent bundle $TM$ together with a
vector bundle homomorphism $I\,:\,S\,\longrightarrow\, S$ satisfying the conditions that
\begin{enumerate}
\item $I^{2}=-{\rm Id}_{S}$, and

\item $T^{1,0}$ is the $\sqrt{-1}$-eigenbundle of $I$.
\end{enumerate}

A $(2n+1)$-dimensional manifold $M$ equipped with a triple $(T^{1,0},\, S,\, I)$ as above is 
called a {\em CR-manifold}. A {\em contact CR-manifold} is a CR-manifold $M$ with a contact 
$1$-form $\eta$ on $M$ such that $\ker\eta=S$. Let $\xi$ denote the Reeb vector field for the 
contact form $\eta$. On a contact CR-manifold, the above homomorphism $I$ extends to entire 
$TM$ by setting $I(\xi)\,=\,0$.

\begin{definition}
A contact CR-manifold $(M,\, (T^{1,0},\, S,\, I),\, (\eta,\, \xi))$ is a {\em 
strongly pseudo-convex CR-manifold} if the Hermitian form $L_{\eta}$ on $S_x$ defined by 
$L_{\eta}(X,Y)=d\eta(X, IY)$, $X,Y\,\in S_{x}$, is positive definite for every point
$x\,\in\, M$. 
\end{definition}

For a strongly pseudo-convex CR-manifold $(M, \,(T^{1,0},\, S, \,I),\, (\eta,\, \xi))$, we have
 a canonical Riemann metric $g_{\eta}$ on $M$ which is defined by
\[g_{\eta}(X,Y):= L_{\eta}(X,Y)+\eta(X)\eta(Y)\, ,\ \ X,\,Y\,\in \,T_{x}M\ .
\]

\begin{definition}
A Sasakian manifold is a strongly pseudo-convex CR-manifold $$(M, \,(T^{1,0},\, S,\, I),\,
(\eta,\, \xi))$$ such that for any section $\zeta$ of $T^{1,0}$, $[\xi, \zeta]$ is also a section of $T^{1,0}$.

\end{definition}
For a  Sasakian manifold $(M, \,(T^{1,0},\, S,\, I),\,
(\eta,\, \xi))$, the metric cone of   $(M, g_{\eta})$ is a K\"ahler manifold.
We can also define a Sasakian manifold as a contact metric manifold whose metric cone is   K\"ahler (see \cite{BoG}).

\section{Almost-formality}

Let $(M,\, (T^{1,0}, \,S,\, I),\, (\eta, \,\xi))$ be a compact Sasakian manifold. Then the 
Reeb vector field $\xi$ defines a $1$-dimensional foliation $\mathcal F_{\xi}$ on $M$. It is 
known that the map $I:TM\to TM$ associated with the CR-structure $T^{1,0}$ 
defines a transversely complex structure on the foliated manifold $(M,\,\mathcal F_{\xi})$. 
Furthermore, 
the closed basic $2$-form $d\eta$ is a transversely K\"ahler structure with respect
to this transversely complex structure. 

A  differential form $\omega$ on $M$ is called {\em basic} if  the equations
\begin{equation}\label{bafo}
i_{\xi}\omega=0= {\mathcal L}_{\xi}\omega
\end{equation}
hold.
We denote by $A^{\ast}_{B}(M)$ the subspace of basic 
forms in the de Rham complex $A^{\ast}(M)$.
 Then
$A^{\ast}_{B}(M)$ is a sub-complex of the de Rham complex $A^{\ast}(M)$.

Corresponding to the
decomposition $S_{\C}=T^{1,0}\oplus T^{0,1}$, we have the bigrading $$A^{r}_{B}(M)_{\C}=\bigoplus_{p+q=r} A^{p,q}(M)$$ as well as the decomposition of the exterior 
differential $$d_{\vert A^{r}_{B}(M)_{\C}}=\partial_{\xi}+\overline\partial_{\xi}$$ on $A^{r}_{B}(M)_{\C}$, so that $$\partial_{\xi}\,:\,A^{p,q}_{B}(M)\to
A^{p+1,q}_{B}(M)\ \text{ and } \
\overline\partial_{\xi}\,:\,A^{p,q}_{B}(M)\to 
A^{p,q+1}_{B}(M)\, .$$
 
We have the transverse Hodge theory as in (\cite{KT}, \cite{EKA}).
Consider the usual Hodge star operator $$\ast: A^{r}(M)\to A^{2n+1-r}(M)$$
associated to the Sasakian metric $g_{\eta}$ and the formal adjoint operator
$$\delta=-\ast d\ast :A^{r}(M)\to  A^{r-1}(M) .$$
We define the homomorphism $$\star_{\xi}:A^{r}_{B}(M)
\to A^{2n-r}_{B}(M)$$ to be
$\star_{\xi}\omega=\ast(\eta\wedge \omega)$ for $\omega\in A^{r}_{B}(M)$.
Also define the operators $$\delta_{\xi}=-\star_{\xi}d\star_{\xi}:
A^{r}_{B}(M)\to A^{r-1}_{B}(M),$$
$$\partial_{\xi}^{\ast}\,=-\star_{\xi}\overline\partial_{\xi}\star_{\xi}:
A^{p,q}_{B}(M)\to A^{p-1,q}_{B}(M)\, ,$$
$$\overline\partial_{\xi}^{\ast}\,=\,-\star_{\xi}\partial_{\xi}\star_{\xi}\,:\,
A^{p,q}_{B}(M)\,\longrightarrow\, A^{p,q-1}_{B}(M)$$ and
$\Lambda \,=\,-\star_{\xi}(d\eta\wedge)\star_{\xi}$.
They are the formal adjoints of $d$, $\partial_{\xi}$, $\overline\partial_{\xi}$ and
$(d\eta\wedge)$ respectively for the pairing 
\[A^{r}_{B}(M)\times A^{r}_{B}(M)\,\ni\,
(\alpha,\,\beta)\,\longmapsto\, \int_{M} \eta\wedge\alpha\wedge \star_{\xi}\beta\, .
\]
Define the Laplacian operators $$\Delta\,:\, A^{r}(M)\,\longrightarrow\, A^{r}(M)\ \ \text{ and }
\ \ \Delta_{\xi}\,:\,A^{r}_{B}(M)\,\longrightarrow\,
A^{r}_{B}(M)$$ by
\[\Delta\,=\,d\delta+\delta d\ \ \text{ and } \ \ \Delta_{\xi}\,=\,d\delta_{\xi}+\delta_{\xi}d
\]
respectively. For $\omega\,\in\, A^{r}_{B}(M)$, since the relation $\ast\omega
\,=\,(\star_{\xi}\omega)\wedge \eta$ holds, we have the relation 
\[\delta\omega\,=\,\delta_{\xi}\omega+\ast (d\eta\wedge \star_{\xi}\omega)\, .
\]
Thus, for $\omega\,\in\, A^{1}_{B}(M)$, the equality $\delta_{\xi}\omega
\,=\,\delta\omega$ holds, and hence for $f\,\in\, A^{0}_{B}(M)$, we have
that $\Delta_{\xi}f\,=\,\Delta f$.
The usual K\"ahler identities
\[[\Lambda, \partial_{\xi}]\,=\,
-\sqrt{-1}\overline\partial_{\xi}^{\ast}\ \ \text{ and } \ \
[\Lambda ,\overline\partial_{\xi}]=\sqrt{-1}\partial_{\xi}^{\ast}
\]
hold, and these imply that
\[\Delta_{\xi}\,=\,2\Delta_{\xi}^{\prime}\,=\,2\Delta_{\xi}^{\prime\prime}\, ,
\]
where $\Delta_{\xi}^{\prime}\,=\,\partial_{\xi}\partial_{\xi}^{\ast}+\partial_{\xi}^{\ast}\partial_{\xi}$
and $\Delta_{\xi}^{\prime\prime}\,=\,\overline\partial_{\xi}\overline\partial_{\xi}^{\ast}+
\overline\partial_{\xi}^{\ast}\overline\partial_{\xi}$.

A {\em basic vector bundle} $E$ over the foliated manifold  $(M,\,\mathcal F_{\xi})$ is a $C^\infty$ vector bundle over $M$ which  has local trivializations with respect to an open covering 
$M\,=\,\bigcup_{\alpha} U_{\alpha}$ satisfying the condition that each transition function 
$f_{\alpha\beta}:U_{\alpha}\cap U_{\beta}\to {\rm GL}_{r}(\C)$ is basic 
on $U_{\alpha}\cap U_{\beta}$ i.e.  it is constant on the leaves of the foliation 
$\mathcal F_{\xi}$.
For a basic vector bundle $E$, a differential form $\omega\,\in\, A^{\ast}(M,\,E)$ with 
values in $E$ is called basic if $\omega$ is basic on every $U_{\alpha}$, meaning 
$\omega_{\vert U_{\alpha}}\,\in\, A^{\ast}_{B_{\mathcal F}}(U_{\alpha})\otimes \C^{r}$ for 
every $\alpha$. Let $$A^{\ast}_{B}(M,\, E)\,\subset\,A^{\ast}(M,\,E)$$ denote the 
subspace of basic forms in the space $A^{\ast}(M,\,E)$ of differential forms with values in 
$E$.
Corresponding to the
decomposition $S_{\C}\,=\,T^{1,0}\oplus T^{0,1}$, we have the bigrading $$A^{r}_{B}(M,\,E)\,=\,\bigoplus_{p+q=r} A^{p,q}(M,\, E).$$

We shall consider any flat vector bundle $(E,\, D)$ over $M$ as a basic vector 
bundle by local flat frames. Then, $A^{\ast}_{B}(M,\, E)$ is a sub-complex of the de Rham 
complex $A^{\ast}(M,\,E)$ equipped with the differential $D$ associated to the flat 
connection.
We denote by $H^{\ast}(M,\,E) $ and $H^{\ast}_{B}(M,\,E) $ the cohomology of the de Rham complex  $A^{\ast}(M,\,E)$ and the cohomology of the sub-complex $A^{\ast}_{B}(M,\, E)$ respectively.
Suppose $E$ is equipped with a Hermitian metric $h$.
 Let
$$D=\,\nabla+\phi$$ 
be the canonical decomposition of the connection $D$ (see \eqref{code}).
Then, using the pairing $A^{r}(M,\,E)\times A^{2n+1-r}(M,\,E)\,\longrightarrow\, A^{2n+1}(M)$
associated with $h$, define the Hodge star operator $$\ast_{h}\,:\, A^{r}(M,\,E)
\,\longrightarrow\, A^{2n+1-r}(M,\,E)$$ as well as the formal adjoint operator
$D^{\ast}\,=\,-\ast_{h}D \ast_{h}$.

Assume the Hermitian structure $h$ to be basic (equivalently, $\phi(\xi)\,=\,0$  \cite[Proposition 4.1]{BK}).
By $D_{\xi}\,=\,\nabla_{\xi}$,  the unitary connection
$\nabla$ restricts to an operator
$$\nabla\,:\, A^{r}_{B}(M, \,E)\,\longrightarrow\,
A^{r+1}_{B}(M,\, E)\, .$$
Now, on $A^{p,q}_{B}(M,\, E)$, decompose $$\nabla\,=\,
\partial_{h,\xi}+\overline\partial_{h,\xi}$$ such that $\partial_{h,\xi}\,:\,
A^{p,q}_{B}(M,\, E) \,\longrightarrow\, A^{p+1,q}_{B}(M,\, E)$
and $\overline\partial_{h,\xi}\,:\,A^{p,q}_{B}(M,\, E)
\,\longrightarrow\, A^{p,q+1}_{B}(M,\, E)$.
Since $\nabla$ is a unitary connection, we have that $$\overline\partial_{\xi}h(s_{1},s_{2})
\,=\,h(\overline\partial_{h,\xi}s_{1}, s_{2})+h(s_{1}, \partial_{h,\xi}s_{2})$$ for
$s_1,\, s_2\,\in\, A^{0,0}_{B}(M,\, E)$.
We define the operator $$\star_{h, \xi}\,:\, A^{r}_{B}(M,\,E)\,\longrightarrow\,
A^{2n-r}_{B}(M,\,E)$$ and the formal adjoint operators
$$(\nabla)^{\ast}_{\xi}\,=\,-\star_{h, \xi}\nabla^{h}\star_{h,\xi}\,:\,
A^{r}_{B}(M,\,E)\,\longrightarrow\, A^{r-1}_{B}(M,\,E)\, ,$$
$$\partial_{h,\xi}^{\ast}\,=\,-\star_{h,\xi}\overline\partial_{h,\xi}\star_{h,\xi}\,:\,
A^{p,q}_{B}(M,\,E)\,\longrightarrow\, A^{p-1,q}_{B}(M,\,E)$$,
$$\overline\partial_{h,\xi}^{\ast}\,=\,-\star_{h,\xi}\partial_{h,\xi}\star_{h,\xi}\,:\,
A^{p,q}_{B}(M,\,E)\,\longrightarrow\,
A^{p,q-1}_{B}(M,\,E)\, ,$$ and
$\Lambda_{h} \,:=\,-\star_{h,\xi}(d\eta\wedge)\star_{h,\xi}$ in the same way as above.
We now have the K\"ahler identities
\begin{equation}\label{Kaid}
[\Lambda,\, \partial_{h,\xi}]\,=\,-\sqrt{-1}\overline\partial_{h,\xi}^{\ast} \ \
\text{ and }\ \ [\Lambda ,\,\overline\partial_{h,\xi}]\,=\, \sqrt{-1}\partial_{h,\xi}^{\ast}\,. 
\end{equation}

\begin{thm}{\rm (\cite[Theorem 4.2]{BK})}\label{thm1}
Let $(M,\, (T^{1,0},\, S,\, I),\, (\eta,\, \xi))$ be a compact Sasakian manifold and
$(E, \, D)$ a flat complex vector bundle over $M$ with a Hermitian metric $h$.
Then the following two conditions are equivalent:
\begin{itemize}
\item The Hermitian structure $h$ is harmonic, i.e., $(\nabla)^{\ast}\phi\,=\,0$.

\item The Hermitian structure $h$ is basic and for the 
decomposition $$\phi\,=\,\theta+\bar\theta$$ with 
$\theta\,\in\, A^{1,0}_{B}(M,\,{\rm End}(E))$ and $ 
\bar\theta\in A^{0,1}_{B}(M,\,{\rm End}(E))$, the equalities 
\[\overline\partial_{h,\xi} \overline\partial_{h,\xi} \,=\,0,\qquad 
[\theta,\,\theta]\,=\,0\qquad {\rm and} \qquad 
\overline\partial_{h,\xi} \theta\,=\,0\, \]
hold.
\end{itemize}
\end{thm}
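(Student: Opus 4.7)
The plan is to handle the two directions separately, following the strategy of Simpson for harmonic bundles over compact K\"ahler manifolds and adapting it via the transverse K\"ahler structure of $M$.

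For $(2)\Rightarrow(1)$, assume $h$ is basic, so that $\phi(\xi)=0$ and $\phi=\theta+\bar\theta$ lies in $A^1_B(M,\mathrm{End}(E))$. The identity $\delta|_{A^1_B}=\delta_\xi$ noted in the paper extends to give $\nabla^*\phi=(\nabla)^*_\xi\phi$, and the cross-terms $\bar\partial_{h,\xi}^*\theta$, $\partial_{h,\xi}^*\bar\theta$ vanish for bidegree reasons, so
\[
\nabla^*\phi = \partial_{h,\xi}^*\theta + \bar\partial_{h,\xi}^*\bar\theta.
\]
The K\"ahler identities \eqref{Kaid} together with $\Lambda\theta=\Lambda\bar\theta=0$ reduce this to $-\sqrt{-1}\Lambda\bar\partial_{h,\xi}\theta + \sqrt{-1}\Lambda\partial_{h,\xi}\bar\theta$. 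The assumption $\bar\partial_{h,\xi}\theta=0$, combined with its Hermitian adjoint $\partial_{h,\xi}\bar\theta=0$ (valid because $\nabla$ is unitary and $\bar\theta=\theta^*$), then yields $\nabla^*\phi=0$.

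The direction $(1)\Rightarrow(2)$ I would attack in two stages. First, I would show that $\nabla^*\phi=0$ forces $h$ to be basic, i.e.\ $\phi(\xi)=0$; this step uses the Sasakian structure essentially, since $\xi$ is Killing, and a Bochner-type integration combined with the compactness of $M$ yields the vanishing of $\phi(\xi)$. Once basicness is in hand, decompose $\phi=\theta+\bar\theta$ with both pieces in the basic bicomplex. The flatness $D^2=0$ splits into a self-adjoint part $d_\nabla\phi=0$ and a skew-adjoint part $\nabla^2+\phi\wedge\phi=0$; decomposing these by bidegree produces, in particular, $\partial_{h,\xi}\theta=0$, $\bar\partial_{h,\xi}\bar\theta=0$, $\bar\partial_{h,\xi}\theta=-\partial_{h,\xi}\bar\theta$, and $\bar\partial_{h,\xi}^2+\bar\theta\wedge\bar\theta=0$ together with its $(2,0)$-conjugate.

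The main obstacle is to extract the individual vanishings $\bar\partial_{h,\xi}^2=0$, $[\theta,\theta]=0$ and $\bar\partial_{h,\xi}\theta=0$ from these combined relations. A direct application of \eqref{Kaid} with the harmonic equation gives only the trace-type identity $\Lambda\bar\partial_{h,\xi}\theta=0$; upgrading this to $\bar\partial_{h,\xi}\theta=0$ and splitting the $(0,2)$-curvature relation into its two pieces requires a Weitzenb\"ock/positivity argument of the type used by Simpson in the K\"ahler case. The key observation that makes the transfer work is that Simpson's calculation relies only on the K\"ahler identities and on Hodge theory for the relevant bicomplex; on a compact Sasakian manifold these are supplied, respectively, by \eqref{Kaid} and by the transverse Hodge theory of \cite{KT, EKA} applied to the basic bicomplex $(A^{*,*}_B(M,\mathrm{End}(E)),\partial_{h,\xi},\bar\partial_{h,\xi})$. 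Once these are in place, Simpson's K\"ahler argument should go through verbatim.
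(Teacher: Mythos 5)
First, a point of reference: the paper does not prove this statement at all --- it is imported verbatim as \cite[Theorem 4.2]{BK} --- so there is no internal argument to measure yours against, and your attempt has to stand on its own. On those terms, your direction $(2)\Rightarrow(1)$ is essentially complete and correct: the reduction $\nabla^{\ast}\phi=\partial_{h,\xi}^{\ast}\theta+\overline\partial_{h,\xi}^{\ast}\bar\theta$ by bidegree, the use of \eqref{Kaid} together with $\Lambda\theta=\Lambda\bar\theta=0$, and the adjoint relation between $\overline\partial_{h,\xi}\theta$ and $\partial_{h,\xi}\bar\theta$ coming from unitarity of $\nabla$ and self-adjointness of $\phi$ are all legitimate, and here there is no circularity because basicness of $h$ is part of the hypothesis.

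The converse direction has a genuine gap, and it sits exactly where the Sasakian geometry does the work. Your first stage --- that $\nabla^{\ast}\phi=0$ forces $\phi(\xi)=0$ --- is supported only by the phrases ``$\xi$ is Killing'' and ``Bochner-type integration.'' This is the one step with no counterpart in Simpson's K\"ahler argument, so it is precisely the step that cannot be borrowed; it requires a foliated Siu--Sampson/Corlette-type Weitzenb\"ock identity on $(M,\mathcal F_{\xi})$, integrated against $\eta\wedge(d\eta)^{n}$, in which $|\phi(\xi)|^{2}$ appears with a definite sign, and no such formula is written down. There is also a logical ordering problem: the operators $\partial_{h,\xi}$, $\overline\partial_{h,\xi}$, the identities \eqref{Kaid}, and the transverse Hodge theory on $A^{\ast,\ast}_{B}(M,{\rm End}(E))$ are only defined once $h$ is known to be basic, so none of them may be invoked to establish basicness in the first place. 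Finally, the step you yourself flag as the main obstacle --- upgrading $\Lambda G=0$ (for $G$ the pseudocurvature of $D''$) to $\overline\partial_{h,\xi}\overline\partial_{h,\xi}=0$, $[\theta,\theta]=0$, $\overline\partial_{h,\xi}\theta=0$ --- is deferred to ``Simpson's argument should go through verbatim.'' Your identification of the needed ingredients (transverse K\"ahler identities plus basic Hodge theory) is plausible, but the integral identity of \cite[Section 2]{SimC} forcing $\int_{M}|G|^{2}\,\eta\wedge(d\eta)^{n}=0$ is the actual content of that step and is not carried out. As written, the proposal is a correct plan for $(1)\Rightarrow(2)$, not a proof of it.
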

For a semi-simple  flat complex vector bundle $(E, \, D)$ over a compact Sasakian manifold $M$,
we have a harmonic metric $h$  by Corlette's Theorem.
 On $A^{\ast}_{B}(M, \,E)$, we define the operators $D^{\prime}=\partial_{h,\xi}+\bar\theta:A^{\ast}_{B}(M, \,E)\to A^{\ast+1}_{B}(M, \,E)$ and $D^{\prime\prime}=\overline\partial_{h,\xi}+\theta:A^{\ast}_{B}(M, \,E)\to A^{\ast+1}_{B}(M, \,E)$.
By Theorem \ref{thm1}, we have $D^{\prime\prime}D^{\prime\prime}=0$.
Since $D$ is flat, $\nabla=\partial_{h,\xi}+\overline\partial_{h,\xi}$ is unitary and $\phi=\theta+\bar\theta$ is self-adjoint,
we have the equalities $D^{\prime}D^{\prime}=0$ and $D^{\prime }D^{\prime\prime}+D^{\prime\prime}D^{\prime}=0$.
 Define $D^{c}=\sqrt{-1}(D^{\prime\prime}-D^{\prime})$.
By using K\"ahler identities (\ref{Kaid}) and the similar equations for $\theta$ and $\bar\theta$,
we  have the  K\"ahler identities for the operators  $D^{\prime}$ and  $D^{\prime\prime}$
\begin{equation}\label{KaidT}
[\Lambda,\, D^{\prime}]\,=\,-\sqrt{-1}(D^{\prime\prime})^{\ast} \ \
\text{ and }\ \ [\Lambda ,\,D^{\prime\prime}]\,=\, (D^{\prime})^{\ast}\,
\end{equation}
as  \cite[Lemma 3.1]{SimC}.
By the similar arguments in \cite[Lemma 5.11]{DGMS}, we have the $DD^{c}$-Lemma:
\[{\rm ker}D\cap {\rm ker}D^{c}\cap {\rm  im}D={\rm ker}D\cap {\rm ker}D^{c}\cap {\rm  im}D^{c}={\rm  im}DD^{c}.
\]
 %and 
 %$D^{\prime}D^{\prime\prime}$-Lemma:
%\[{\rm ker}D^{\prime}\cap {\rm ker}D^{\prime\prime}\cap {\rm  im}D={\rm  im}D^{\prime}D^{\prime\prime}.\]
Consider the sub-complex ${\rm ker}D^{c}\subset A^{\ast}_{B}(M, \,E)$ and the cohomology $H^{\ast}_{D^{c}}(A^{\ast}_{B}(M, \,E))$ of $A^{\ast}_{B}(M, \,E)$ for the differential $D^{c}$.
By the similar  arguments in \cite[Section 6]{DGMS}, the inclusion ${\rm ker}D^{c}\subset A^{\ast}_{B}(M, \,E)$ is a quasi-isomorphism, the differential on $H^{\ast}_{D^{c}}(A^{\ast}_{B}(M, \,E))$  induced by  $D$ is trivial and the quotient $q:{\rm ker}D^{c}\to H^{\ast}_{D^{c}}(A^{\ast}_{B}(M, \,E))$ is a quasi-isomorphism.
Hence, we have the sequence 
\begin{equation}\label{formal}
A^{\ast}_{B}(M, \,E) \leftarrow {\rm ker}D^{c} \to H^{\ast}_{B}(M, \,E)
\end{equation}
of quasi-isomorphisms.
By $d\eta\in A^{1,1}_{B}(M)$ and $\partial_{\xi}d\eta=\overline\partial_{\xi}d\eta=0$,
we have the sub-complexes 
$${\rm ker}D^{c} \oplus {\rm ker}D^{c}\wedge \eta\subset A^{\ast}_{B}(M, \,E)\oplus A^{\ast}_{B}(M, \,E)\wedge \eta\subset A^{\ast}(M, \,E)$$
and the cochain complex $H^{\ast}_{B}(M, \,E)\oplus H^{\ast}_{B}(M, \,E)\otimes \langle \eta \rangle$.
Define the decreasing  flirtation $F^{\ast}$ on $A^{\ast}_{B}(M, \,E)\oplus A^{\ast}_{B}(M, \,E)\wedge \eta$ (resp. $H^{\ast}_{B}(M, \,E)\oplus H^{\ast}_{B}(M, \,E)\otimes \langle \eta \rangle$) such that $F^{-1}=A^{\ast}_{B}(M, \,E)\oplus A^{\ast}_{B}(M, \,E)\wedge \eta$, $F^{0}= A^{\ast}_{B}(M, \,E)$ and $F^{1}=0$ and
define the one on $H^{\ast}_{B}(M, \,E)\oplus H^{\ast}_{B}(M, \,E)\otimes \langle \eta \rangle$ in the same manner.
By the standard argument on the spectral sequences of these filtrations,
we have  the sequence 
\begin{equation}\label{etaformal}
A^{\ast}_{B}(M, \,E)\oplus A^{\ast}_{B}(M, \,E)\wedge \eta \leftarrow {\rm ker}D^{c} \oplus {\rm ker}D^{c}\wedge \eta \to H^{\ast}_{B}(M, \,E)\oplus H^{\ast}_{B}(M, \,E)\otimes \langle \eta \rangle
\end{equation}
of quasi-isomorphisms.

\begin{prop}
The inclusion $A^{\ast}_{B}(M, \,E)\oplus A^{\ast}_{B}(M, \,E)\wedge \eta\subset A^{\ast}(M, \,E)$ is a quasi-isomorphism.

\end{prop}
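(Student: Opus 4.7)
The plan is to identify $A^{\ast}_{B}(M,\,E) \oplus A^{\ast}_{B}(M,\,E) \wedge \eta$ as the kernel of the Lie derivative $\mathcal{L}_{\xi}$ acting on $A^{\ast}(M,\,E)$, and then to deduce the quasi-isomorphism from a standard averaging argument over the closure of the Reeb flow.

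First, I would observe that every $\omega \in A^{\ast}(M,\,E)$ admits a unique horizontal decomposition $\omega = \alpha + \eta \wedge \beta$ with $i_{\xi}\alpha = i_{\xi}\beta = 0$, namely $\beta = i_{\xi}\omega$ and $\alpha = \omega - \eta \wedge i_{\xi}\omega$. Cartan's formula together with $\eta(\xi) = 1$ and $i_{\xi}d\eta = 0$ yields $\mathcal{L}_{\xi}\eta = 0$, and since $\mathcal{L}_{\xi}$ commutes with $i_{\xi}$ it preserves the horizontal subspace. Consequently
\[
\mathcal{L}_{\xi}\omega \,=\, \mathcal{L}_{\xi}\alpha + \eta\wedge \mathcal{L}_{\xi}\beta,
\]
so $\mathcal{L}_{\xi}\omega = 0$ is equivalent to $\alpha$ and $\beta$ both being basic, yielding
\[
\ker \mathcal{L}_{\xi} \,=\, A^{\ast}_{B}(M,\,E) \oplus A^{\ast}_{B}(M,\,E) \wedge \eta .
\]

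Next, let $T$ denote the closure of the one-parameter subgroup $\{\phi_{t}^{\xi}\}_{t\in\R}$ inside the compact Lie group of Sasakian isometries of $(M,\,g_{\eta})$; then $T$ is a compact connected torus. Because $E$ is viewed as a basic vector bundle---its transition functions in local flat frames being constant along the leaves of $\mathcal{F}_{\xi}$---the $T$-action on $M$ lifts to an action on the complex $A^{\ast}(M,\,E)$ commuting with $D$. Since $\xi$ generates a dense subgroup of $T$, the $T$-invariants $A^{\ast}(M,\,E)^{T}$ coincide with $\ker \mathcal{L}_{\xi}$, hence with the subcomplex in question. The classical averaging argument then finishes the proof: for any $g\in T$, a path in $T$ from $e$ to $g$ produces via Cartan's homotopy formula a chain homotopy $g^{\ast} \simeq \mathrm{id}$, and integrating over the normalized Haar measure on $T$ yields a projection $P : A^{\ast}(M,\,E) \to A^{\ast}(M,\,E)^{T}$ together with an operator $H$ satisfying $\mathrm{id} - P = DH + HD$.

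The step I expect to be the main obstacle is the honest construction of the $T$-action on $A^{\ast}(M,\,E)$: while the Reeb flow lifts to the $E$-valued complex through parallel transport along its orbits, extending this lift continuously to the full torus $T$ genuinely uses that the transition functions of $E$ are basic with respect to $\mathcal{F}_{\xi}$. Once that lift is in place, the remainder of the proof reduces to routine averaging.
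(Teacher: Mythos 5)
The step you yourself flag as the main obstacle is exactly where the argument has a genuine gap, and the justification you offer for it is not the right one. The lift of the Reeb flow to $E$ is parallel transport along the orbits of $\xi$ with respect to the flat connection $D$; around a closed Reeb orbit this lift acts by the monodromy of $D$ along that orbit. Basicness of the transition functions cannot control this: in this paper \emph{every} flat bundle is regarded as basic via local flat frames, so your argument, if correct as written, would prove the proposition for arbitrary flat bundles, whereas nothing in your write-up rules out that the monodromy along an orbit is a matrix with unbounded powers (e.g.\ $\mathrm{diag}(2,1/2)$), in which case the lifted one-parameter group is not precompact, does not factor through the closure torus $T$, and there is no compact group to average over. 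The missing ingredient is the harmonic metric: harmonicity forces $h$ to be basic, i.e.\ $\phi(\xi)=0$, hence $D_{\xi}=\nabla_{\xi}$ with $\nabla$ unitary, so the $D$-parallel transport along Reeb orbits is $h$-unitary. Only then does the lifted flow sit inside a compact group of bundle isometries, so that its closure $\tilde T$ is a compact torus, the $\tilde T$-invariants coincide with $\ker(Di_{\xi}+i_{\xi}D)$ by density of the flow, and your averaging produces the desired homotopy $\mathrm{id}-P=DH+HD$. With that point supplied, your route does work, but it is not routine without it.

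For comparison, the paper avoids group averaging entirely and argues by Hodge theory, using the same key fact $\phi(\xi)=0$ in a different way: since $\xi$ is Killing and $D_{\xi}=\nabla_{\xi}$ is unitary, $D_{\xi}$ is skew-adjoint and therefore commutes with $\Delta_{D}$; hence harmonic forms lie in $\ker D_{\xi}$ (giving surjectivity on cohomology), and the Green operator preserves $\ker D_{\xi}$, so any $D$-exact form in $\ker D_{\xi}$ is $D$ of an element of $\ker D_{\xi}$ (giving injectivity). Both approaches hinge on the harmonicity of $h$; your proposal never invokes it, and without it the argument fails.
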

\begin{proof}

Consider the covariant derivative $D_{\xi}$ on $A^{\ast}(M, \,E)$ at the Reeb vector field $\xi$.
Then, a form $\omega\in A^{\ast}(M, \,E)$ is basic if and only if 
$D_{\xi}\omega=i_{\xi}\omega=0$.
For any $\omega\in {\rm ker} D_{\xi}$, we have the decomposition $$\omega=(\omega-i_{\xi}\omega\wedge \eta)+i_{\xi}\omega\wedge \eta\in A^{\ast}_{B}(M, \,E)\oplus A^{\ast}_{B}(M, \,E)\wedge \eta$$ and hence 
we can say  ${\rm ker} D_{\xi}=A^{\ast}_{B}(M, \,E)\oplus A^{\ast}_{B}(M, \,E)\wedge \eta$.
Since we have $D_{\xi}=\nabla_{\xi}$ for the unitary connection $\nabla$ and $\xi$ is Killing, we have  $(D_{\xi})^{\ast}=- D_{\xi}$ and hence $D_{\xi}$ and the  Laplacian operator $\Delta_{D}=DD^{\ast}+D^{\ast}D$ commute.
Hence, if $\omega\in A^{\ast}(M, \,E)$ is harmonic, then $D_{\xi}\omega$ is also harmonic.
Since $D_{\xi}$ induces a trivial map on the cohomology $H^{\ast}(M, \,E)$ of $A^{\ast}(M, \,E)$, we have $D_{\xi}\omega=0$.
Thus, the inclusion  $ {\rm ker} D_{\xi}=A^{\ast}_{B}(M, \,E)\oplus A^{\ast}_{B}(M, \,E)\wedge \eta\subset A^{\ast}(M, \,E)$
induces a surjection on cohomology.

Consider the operator $H_{D}$ on $A^{\ast}(M, \,E) $ defined as the projection to harmonic forms.
By the above argument, we have $D_{\xi}H_{D}=0$. 
Since $H_{D}$ is self-adjoint, we have $H_{D}D_{\xi}=0$.
Consider the Green operator $G_{D}$ for  $\Delta_{D}$.
For  $\omega\in  {\rm ker} D_{\xi}$, we have $\Delta_{D}D_{\xi}G\omega= D_{\xi}\Delta_{D}G\omega= D_{\xi}(\omega-H_{D}\omega)=0$ and hence $D_{\xi}G\omega=H_{D}D_{\xi}G\omega=0$.
If $D\omega=0$, then we have $\omega=H_{D}(\omega)+DD^{\ast}G\omega$ with $G\omega\in {\rm ker} D_{\xi}$.
We notice that $D^{\ast}$ preserves $ {\rm ker} D_{\xi}$ since $D_{\xi}$ and $D^{\ast}$ commute.
If $[\omega]=0$ in $H^{\ast}(M, \,E)$, then $H_{D}(\omega)=0$ and so we have  $\omega=DD^{\ast}G\omega$ for $D^{\ast}G\omega \in {\rm ker} D_{\xi}$.
Thus, the inclusion  $ {\rm ker} D_{\xi}=A^{\ast}_{B}(M, \,E)\oplus A^{\ast}_{B}(M, \,E)\wedge \eta\subset A^{\ast}(M, \,E)$
induces an injection on cohomology.
Hence the proposition follows.

\end{proof}

\begin{cor}\label{alfo}
We have  the sequence 
\begin{equation}\label{etaformal}
A^{\ast}(M, \,E)\leftarrow {\rm ker}D^{c} \oplus {\rm ker}D^{c}\wedge \eta \to H^{\ast}_{B}(M, \,E)\oplus H^{\ast}_{B}(M, \,E)\otimes \langle \eta \rangle
\end{equation}
of quasi-isomorphisms.
\end{cor}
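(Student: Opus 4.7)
The plan is to deduce the corollary by composing the quasi-isomorphisms already assembled. The sequence (\ref{etaformal}) gives, as a quasi-isomorphism, both the inclusion
\[{\rm ker}D^{c} \oplus {\rm ker}D^{c}\wedge \eta \hookrightarrow A^{\ast}_{B}(M,\,E)\oplus A^{\ast}_{B}(M,\,E)\wedge \eta\]
and the projection onto $H^{\ast}_{B}(M,\,E)\oplus H^{\ast}_{B}(M,\,E)\otimes \langle \eta\rangle$. The preceding Proposition supplies the further quasi-isomorphism
\[A^{\ast}_{B}(M,\,E)\oplus A^{\ast}_{B}(M,\,E)\wedge \eta \hookrightarrow A^{\ast}(M,\,E).\]
Composing the two inclusions gives the left arrow of the corollary; by two-out-of-three it is a quasi-isomorphism, and the right arrow is unchanged from (\ref{etaformal}).

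Before reading off the conclusion I would briefly verify that $K:={\rm ker}D^{c} \oplus {\rm ker}D^{c}\wedge \eta$ is genuinely a sub-complex of $A^{\ast}(M,\,E)$ with respect to the full flat differential $D$. Using $D\eta = d\eta$, for $\alpha,\beta\in {\rm ker}D^{c}$ one has
\[D(\alpha+\beta\wedge \eta)\;=\;D\alpha \;+\;(-1)^{|\beta|}\beta\wedge d\eta\;+\;D\beta\wedge \eta.\]
Closure of ${\rm ker}D^{c}$ under $D$ follows from $DD^{c}+D^{c}D=0$, which is immediate from $D^{\prime}D^{\prime}=D^{\prime\prime}D^{\prime\prime}=0$ together with $D^{\prime}D^{\prime\prime}+D^{\prime\prime}D^{\prime}=0$ established earlier. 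The mixed term $\beta\wedge d\eta$ still lies in ${\rm ker}D^{c}$: the form $d\eta$ is basic of type $(1,1)$ with $\partial_{\xi}d\eta=\overline\partial_{\xi}d\eta=0$ (hence $D^{c}(d\eta)=0$), and $D^{c}$ acts as a derivation on basic $E$-valued forms. Thus $K\subset A^{\ast}(M,\,E)$ is indeed $D$-stable and the composite inclusion is a chain map.

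I do not expect any substantive obstacle in this final step: the analytic work has all been done in the preceding Proposition, whose proof uses that the Reeb field is Killing, making $D_{\xi}$ skew-adjoint and commuting with $\Delta_{D}$, $H_{D}$ and the Green operator $G_{D}$, so that any $D$-cohomology class or any $D$-exact representative can be replaced by one lying in ${\rm ker}D_{\xi}=A^{\ast}_{B}(M,\,E)\oplus A^{\ast}_{B}(M,\,E)\wedge \eta$. Granted that Proposition and the routine derivation-closure verification above, Corollary \ref{alfo} is a formal consequence.
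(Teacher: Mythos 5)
Your proposal is correct and matches the paper's (implicit) argument: the corollary is stated without separate proof precisely because it is the composition of the quasi-isomorphism sequence already established for $A^{\ast}_{B}(M,\,E)\oplus A^{\ast}_{B}(M,\,E)\wedge\eta$ with the preceding Proposition, exactly as you do. Your additional check that ${\rm ker}D^{c}\oplus{\rm ker}D^{c}\wedge\eta$ is $D$-stable is a sound verification of a fact the paper asserts without comment.
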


\begin{rem}
If  $E$ is trivial, then this statement is proved in \cite{Ti}.
\end{rem}

\section{Deformations of representations of  the fundamental groups of Sasakian manifolds}

We review the work of Goldman-Millson (\cite{GM}).
Let $M$ be a compact manifold with a point $x$.
We consider the variety  ${\mathcal R}(\pi_{1}(M,x), GL_{m}(\C))$ of representations of the fundamental group $\pi_{1}(M,x)$.
This is the set ${\rm Hom}(\pi_{1}(M,x),GL_{m}(\C)) $ with the natural structure of an algebraic variety induced by the group structure of $\pi_{1}(M,x)$ and the algebraic group $GL_{m}(\C)$.
For $\rho\in  {\mathcal R}(\pi_{1}(M,x), GL_{m}(\C))$, we define the flat vector bundle $(E=\pi_{1}(M,x)\backslash(\tilde{M}\times \C^{m}), D)$ where $\tilde{M}$ is the universal covering associated with the base point $x$.
Consider the differential graded Lie algebra $A^{\ast}(M, \, {\rm End}(E))$ and the augmentation map $\epsilon_{x}:A^{0}(M, \, {\rm End}(E))\to {\rm End}(E_{x})$.
Then the analytic germ of ${\mathcal R}(\pi_{1}(M,x), GL_{m}(\C))$ at $\rho$  pro-represents the "deformation  functor" of  the differential graded Lie algebra $A^{\ast}(M, \, {\rm End}(E))_{x}={\rm ker} \epsilon_{x}$ (Theorem \cite[Theorem 6.8]{GM}).
If a finite dimensional  ${\rm End}(E_{x})$-augmented differential graded Lie algebra $L^{\ast}$ with an injective augmentation $\epsilon:L^{0}\to  {\rm End}(E_{x})$  is quasi-isomorphic to the   ${\rm End}(E_{x})$-augmented differential graded Lie algebra $A^{\ast}(M, \, {\rm End}(E))$, then the deformation functor of $A^{\ast}(M, \, {\rm End}(E))_{x}$ is pro-represented by the analytic germ of 
$$\left\{\omega\in L^{1} \left\vert d\omega +\frac{1}{2}[\omega,\omega]=0\right\}\right.\times {\rm End}(E_{x})/\epsilon(L^{0})$$ at the origin(see \cite[Section 3]{GM}).

\begin{thm}
Let $M$ be a $(2n+1)$-dimensional compact Sasakian manifold with a point $x$.
If $n\ge 2$, then for any semi-simple representation $\rho :\pi_{1}(M,x)\to GL_{m}(\C)$, the analytic germ of ${\mathcal R}(\pi_{1}(M,x), GL_{m}(\C))$ at $\rho$  is quadratic.
\end{thm}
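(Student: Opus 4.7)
The plan is to combine Corollary \ref{alfo} with the Goldman--Millson machinery recalled above. Let $L_{\eta}:=[d\eta]\cup$ denote the transverse Lefschetz operator on basic cohomology. Take as the finite-dimensional model the DGLA
\[
L^{*} := H^{*}_{B}(M,\,\mathrm{End}(E)) \oplus H^{*}_{B}(M,\,\mathrm{End}(E))\otimes \langle \eta \rangle,
\]
with Lefschetz-type differential $d(\alpha + \beta\otimes\eta) = (-1)^{|\beta|}L_{\eta}(\beta)$ and the graded Lie bracket inherited from $A^{*}(M,\,\mathrm{End}(E))$. I would first verify that the chain of quasi-isomorphisms in Corollary \ref{alfo} is multiplicative: the middle term $\ker D^{c}\oplus \ker D^{c}\wedge\eta$ is closed under wedge because $D^{c}$ is a graded derivation and $\eta\wedge\eta=0$, so the Lie brackets descend. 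The augmentation $\epsilon:L^{0}=H^{0}_{B}(M,\,\mathrm{End}(E))\to \mathrm{End}(E_{x})$ sending a basic $D$-parallel section to its value at $x$ is injective, since such a section is determined by one value. By the Goldman--Millson theorem the analytic germ of $\mathcal{R}(\pi_{1}(M,x), GL_{m}(\C))$ at $\rho$ is then pro-represented by the germ at the origin of $V \times \mathrm{End}(E_{x})/\epsilon(L^{0})$, where $V\subset L^{1}$ is the Maurer--Cartan variety.

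Writing $\omega = a + b\otimes\eta \in L^{1}$ with $a\in H^{1}_{B}(M,\,\mathrm{End}(E))$ and $b\in H^{0}_{B}(M,\,\mathrm{End}(E))$, a direct computation using $\eta\wedge\eta=0$ gives
\[
d\omega + \tfrac{1}{2}[\omega,\omega] = \bigl(L_{\eta}(b) + \tfrac{1}{2}[a,a]\bigr) + [a,b]\otimes\eta,
\]
so the Maurer--Cartan equation is equivalent to the pair
\[
(\mathrm{I})\ \ L_{\eta}(b) + \tfrac{1}{2}[a,a]=0\text{ in }H^{2}_{B},\qquad (\mathrm{II})\ \ [a,b] = 0\text{ in }H^{1}_{B}.
\]
The key observation is that $(\mathrm{II})$ is redundant once $n\ge 2$: since $[d\eta]$ takes scalar values it is graded-central, so $L_{\eta}([a,b]) = [a,L_{\eta}(b)]$; substituting $(\mathrm{I})$ yields $[a,L_{\eta}(b)] = -\tfrac{1}{2}[a,[a,a]]$, which vanishes by the graded Jacobi identity applied to an odd element. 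Hence $L_{\eta}([a,b]) = 0$, and the transverse Hard Lefschetz theorem for the basic cohomology of a compact Sasakian manifold (in the harmonic-bundle setting) supplies the injectivity of $L_{\eta}:H^{1}_{B}\to H^{3}_{B}$ whenever $n\ge 2$, which forces $[a,b]=0$.

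It then remains to show that the variety $V$ cut out by $(\mathrm{I})$ alone is analytically a quadratic cone. Fix a vector-space splitting $H^{2}_{B}(M,\,\mathrm{End}(E)) = L_{\eta}(H^{0}_{B}(M,\,\mathrm{End}(E)))\oplus W$; the restricted map $L_{\eta}:H^{0}_{B}\to L_{\eta}(H^{0}_{B})$ is a linear isomorphism because $L_{\eta}:H^{0}_{B}\to H^{2}_{B}$ is injective (Hard Lefschetz again, using $n\ge 2$). Define the polynomial biholomorphism
\[
\Phi:L^{1}\longrightarrow L^{1},\qquad \Phi(a,b) = \bigl(a,\, b + \tfrac{1}{2}L_{\eta}^{-1}\pi_{L}[a,a]\bigr),
\]
where $\pi_{L}$ denotes the projection onto $L_{\eta}(H^{0}_{B})$ along $W$. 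A short substitution shows that $\Phi(V) = \{(a,0) : \pi_{W}[a,a]=0\}$, which is the zero locus of the homogeneous quadratic map $a\mapsto \pi_{W}[a,a]$ and hence a quadratic cone. Taking the product with the linear space $\mathrm{End}(E_{x})/\epsilon(L^{0})$ preserves this structure, and the quadraticity of the germ of $\mathcal{R}(\pi_{1}(M,x), GL_{m}(\C))$ at $\rho$ follows. The main obstacle is the bookkeeping in the first paragraph --- verifying that Corollary \ref{alfo} upgrades to a chain of augmented DGLA quasi-isomorphisms --- because once that is in place, the algebra of Hard Lefschetz in degrees $0$ and $1$ together with graded Jacobi handles the rest.
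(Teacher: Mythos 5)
Your proposal is correct and follows essentially the same route as the paper: reduce via Corollary \ref{alfo} and the Goldman--Millson correspondence to the Maurer--Cartan equation on $H^{1}_{B}(M,\,{\rm End}(E))\oplus H^{0}_{B}(M,\,{\rm End}(E))\otimes\langle\eta\rangle$, split it into the two components, and show the component $[\alpha,\beta]=0$ is redundant via the graded Jacobi identity together with the injectivity of $\wedge\, d\eta:H^{1}_{B}\to H^{3}_{B}$ for $n\ge 2$ (transverse Hard Lefschetz). Your explicit automorphism $\Phi$ converting the inhomogeneous degree-two equation into a homogeneous quadratic cone, and your checks of multiplicativity and of the injectivity of the augmentation, are details the paper leaves implicit rather than a different argument.
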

\begin{proof}
By the above arguments and Corollary \ref{alfo}, it is sufficient to prove that the analytic germ of 
\[\left\{\omega\in H^{1}_{B}(M, \,{\rm End}(E))\oplus H^{0}_{B}(M, \,{\rm End}(E))\otimes \langle \eta \rangle \left\vert d\omega +\frac{1}{2}[\omega,\omega]=0\right\}\right.\]
 at the origin is quadratic.
For $\omega=\alpha+\beta\otimes \eta \in H^{1}_{B}(M, \,{\rm End}(E))\oplus H^{0}_{B}(M, \,{\rm End}(E))\otimes \langle \eta \rangle$, the equality  $d\omega +\frac{1}{2}[\omega,\omega]=0$ holds if and only if $\beta\wedge d\eta+\frac{1}{2}[\alpha,\alpha]=0$ and 
$[\alpha,\beta]\otimes \eta=0$.
On the other hand, $\beta\wedge d\eta+\frac{1}{2}[\alpha,\alpha]=0$ implies
\[[\alpha,\beta]\wedge d\eta=[\alpha,\beta \wedge d\eta]=-\frac{1}{2}[\alpha,[\alpha,\alpha]]=0
\]
by the graded Jacobi identity.
By the K\"ahler identities (\ref{KaidT}), if $n\ge 2$,  we can say that the map $H^{1}_{B}(M, \,{\rm End}(E))\ni a\mapsto a\wedge d\eta\in H^{3}_{B}(M, \,{\rm End}(E))$ is injective as the usual Lefschetz decomposition.
Hence, if $n\ge 2$, $\beta\wedge d\eta+\frac{1}{2}[\alpha,\alpha]=0$ implies $[\alpha,\beta]=0$ and 
this means that the equality  $d\omega +\frac{1}{2}[\omega,\omega]=0$ is equivalent to the quadratic  equation $\beta\wedge d\eta+\frac{1}{2}[\alpha,\alpha]=0$.
Hence the theorem follows.
\end{proof}

\begin{rem}
If $\rho$ is trivial, the statement follows from  \cite[Corollary 6.10]{PS} (see also \cite{Kas}) 

\end{rem}

\section{Vanishing cup products}

\begin{thm}
Let $M$ be a $(2n+1)$-dimensional compact Sasakian manifold and $E$ and $E^{\prime}$ be semi-simple  flat complex vector bundles.
For $s, t<n$ with $s+t> n$,  the cup product
\[H^{s}(M, \,E)\otimes H^{t}(M, \,E^{\prime})\to H^{s+t}(M, \,E\otimes E^{\prime})
\]
vanishes.
\end{thm}
\begin{proof}
Considering  the quasi-isomorphisms as in Corollary \ref{alfo} for  $E$, $E^{\prime}$ and $E\otimes E^{\prime}$,
we can say that 
the cup product $H^{\ast}(M, \,E)\otimes H^{\ast}(M, \,E^{\prime})\to H^{\ast}(M, \,E\otimes E^{\prime})$ 
is induced by the product 
\begin{multline*}
 \left(H^{\ast}_{B}(M, \,E)\oplus H^{\ast}_{B}(M, \,E)\otimes \langle \eta \rangle\right) \bigotimes \left(H^{\ast}_{B}(M, \,E^{\prime})\oplus H^{\ast}_{B}(M, \,E^{\prime})\otimes \langle \eta \rangle\right) 
\\
 \to H^{\ast}_{B}(M, \,E\otimes E^{\prime})\oplus H^{\ast}_{B}(M, \,E\otimes E^{\prime})\otimes \langle \eta \rangle .
\end{multline*}
By the K\"ahler identities (\ref{KaidT}),  we can say that the map $H^{r}_{B}(M, \,E)\ni a\mapsto a\wedge d\eta\in H^{r+2}_{B}(M, \,{\rm End}(E))$ is injective for $r\le n-1$ and surjective for $r\ge n-1$
 as the usual Lefschetz decomposition.
 Thus, any $r$-th cohomology class of the complex $H^{\ast}_{B}(M, \,E)\oplus H^{\ast}_{B}(M, \,E)\otimes \langle \eta\rangle$ has a representative in  $H^{r}_{B}(M, \,E)$ for $r\le n$.
By the injectivity,  for $s, t<n$,
any cup product in 
\begin{multline*}
H^{s} \left(H^{\ast}_{B}(M, \,E)\oplus H^{\ast}_{B}(M, \,E)\otimes \langle \eta \rangle\right)\bigotimes H^{t}\left(H^{\ast}_{B}(M, \,E^{\prime})\oplus H^{\ast}_{B}(M, \,E^{\prime})\otimes \langle \eta \rangle\right) \\
\to H^{s+t}\left(H^{\ast}_{B}(M, \,E\otimes E^{\prime})\oplus H^{\ast}_{B}(M, \,E\otimes E^{\prime})\otimes \langle \eta \rangle\right)
\end{multline*}
has a representative in $H^{s+t}_{B}(M, \,E\otimes E^{\prime})$.
By the surjectivity,  if $s+t>n$, any element in $H^{s+t}_{B}(M, \,E\otimes E^{\prime})$ is exact.
Hence the theorem follows.

\end{proof}

\begin{rem}
If  $E$ and $E^{\prime}$ are trivial, the statement follows from  \cite{Bu}.

\end{rem}

\end{document}